\theoremstyle{definition}
\newtheorem{theoreme}{Theorem}
\newtheorem*{ex}{Example}
\newcommand{\N} {\mathbb{N}}
\newcommand{\F} {\mathbb{F}}
\newcommand{\G} {\mathbb{G}}
\newcommand{\att}{K\!\left[\!\left[\frac{1}{t}\right]\!\right]}
\newcommand{\btt}{K\!\left(\!\left(\frac{1}{t}\right)\!\right)}
\pgfplotsset{
	compat=1.18,
	every axis/.append style={
		height=\pgfkeysvalueof{/pgfplots/width}*(((\pgfkeysvalueof{/pgfplots/ymax})+((-1)*(\pgfkeysvalueof{/pgfplots/ymin})))/((\pgfkeysvalueof{/pgfplots/xmax})+((-1)*(\pgfkeysvalueof{/pgfplots/xmin}))))+0.1pt, 
		axis x line=bottom,
		axis y line = left,
		axis lines=middle,							
		axis line style={-{Stealth}},				
		scale only axis,							 
		enlargelimits={0.1},
		grid style={densely dashed,line width=0.4pt,draw=black!30},
		minor grid style={line width=0.25pt,draw=black!20},
		axis equal,
		legend pos=north east,
	},
		every axis plot post/.append style={
		semithick,
	},
}
\author{LUCAS Alexis}
\begin{document}

    {\huge\bfseries Purity and almost strict purity of Anderson $t$-modules \\[0.4cm]}
\textbf{Alexis Lucas*}

\begin{center}
\begin{minipage}{0.9\linewidth}
\begin{small}
\itshape
Normandie Univ, UNICAEN, CNRS, LMNO, 14000 Caen, France \\
E-mail: alexis.lucas@unicaen.fr \\[5mm]
\textbf{Abstract.} We study the relations between the notion of purity of a $t$-module introduced by Anderson and that of almost strict purity for a $t$-module introduced by Namoijam and Papanikolas.
\end{small}
\end{minipage}
\end{center}

\section{Introduction}
In \cite{Anderson}, G.W. Anderson defined $t$-modules and the notion of purity. Related to $t$-modules, C. Namoijam and M. A. Papanikolas defined the notion of almost strict purity in \cite[Remark 4.5]{derivative}, then they proved that an almost strictly pure $t$-module is pure. We are interested here in the reciprocal, and with the help of the work of  A. Maurischat in \cite{maurischat} we show that these two notions are not equivalent by presenting a counter-example (see Theorem \ref{4}).

\section{Purity}
Let $K$ a perfect field containing $\F_q$. We let $\tau:K\rightarrow K$ denote the $q$-th  power Frobenius map and $K\{\tau\}$ be the ring of twisted polynomials in $\tau$ over $K$, subject to the relation, $\tau a =a^q\tau$ for any $a\in K$. Consider $\ell:\F_q[t]\rightarrow K$ a homomorphism of $\F_q$-algebras and denote $\sigma=\tau^{-1}$.

A $t$-module $(E, \varphi)$ over $K$ of dimension $d$ is by definition an $\F_q$-vector space scheme $E$ over
$K$ isomorphic to $ \G_a^d$ together with a homomorphism of $\F_q$-algebras $\varphi : \F_q[t] \rightarrow \operatorname{End}_{\operatorname{grp},\F_q}(E)$ into the ring of $\F_q$-vector space scheme endomorphisms of $E$, such that for all $a \in \F_q[t]$, the endomorphism $d\varphi_a$ on $\operatorname{Lie}(E)$ induced by $\varphi_a$ fulfills the condition that $d\varphi_a - \ell(a)$ is nilpotent.

We will fix in the following $(E,\phi)$ a $t$-module on $K$ of dimension $d$ as well as a coordinate system $\kappa$, i.e. an isomorphism of schemes in $\F_q$-vector spaces $\kappa:E\simeq \G_a^d$ defined on $K$. With respect to this coordinate system, we can represent $\phi_t$ by a matrix $D\in M_d(K\{\tau\}).$

Let $\operatorname{pr}_i:\G_a^d\rightarrow \G_a$ ($1\leq i\leq d)$ be the projection to the $i$-th component of $\G_a^d$, and let $\kappa_i=\kappa\circ \operatorname{pr}_i$. Let $\check{\kappa}_j:\G_a\rightarrow E$ be defined by $\check{\kappa}_j=\kappa^{-1}\circ \operatorname{inj}_j$ where $\operatorname{inj}_j:\G_a\rightarrow \G_a^d$ is the natural injection into the $j-$th component.

We say that $E$ is almost strictly pure if there is some integer $s\geq1$ such that 
$$D^s=A_0+A_1\tau+...+A_r\tau^r$$ with $A_r\in GL_d(K)$.

 The $t$-motive $M(E)$ of $E$ is the free $K\{\tau\}$-module of rank $d$ with base $\{\kappa_1,...,\kappa_d\}$ with a $t$-action on this base defined by
$$t.\begin{pmatrix} \kappa_1 \\ \vdots \\ \kappa_d\end{pmatrix}=D\begin{pmatrix} \kappa_1 \\ \vdots \\ \kappa_d\end{pmatrix}.$$
We define in a similar way the dual $t$-motive $\mathscr{M}$ as the free $K\{\tau\}$-module of rank $d$ of basis $\{\check{\kappa}_1,...,\check{\kappa}_d\}$ whose $t$-action  (on the right) on this basis is defined by
$$\begin{pmatrix} \check{\kappa}_1 & \cdots & \check{\kappa}_d\end{pmatrix}.t=\begin{pmatrix} \check{\kappa}_1 & \cdots & \check{\kappa}_d\end{pmatrix}D.$$
We say that $E$ is abelian if $M(E)$ is a finitely generated $K[t]$-module. In this case, we define $w(M)$ the weight of $M$ by
$$w(M)=\frac{d}{\operatorname{rk}(E)}$$ where $\operatorname{rk}(E)$ is the rank of $M$ as a $K[t]$-module (that is finite because $E$ is abelian). \\
We moreover consider:
\begin{itemize}
\item The ring of formal power series in $\frac{1}{t}$ with coefficients in $K$ denoted by $\att$.
\item The field of Laurent series in $\frac{1}{t}$ with coefficients in $K$ denoted by $\btt$ (that is the field of fractions of $\att$).
\end{itemize}

 The $t$-motive $M(E)$ and the $t$-module $E$ are called pure if there exists a $\att$-lattice $\Lambda$ in $\btt \otimes_{K[t]}M$ as well as positive integers $u,v\in \N$ such that
$$t^u\Lambda=\tau^v\Lambda.$$

We will use the following result, proved by A. Maurischat  in \cite[Theorem 6.6, Theorem 7.2]{maurischat}, characterizing the fact of being abelian and being pure using Newton polygons.

\begin{theoreme}\label{3}
    The $t$-module $E$ is abelian if and only if  the Newton polygon $N_{\lambda_d}$ of the last invariant factor $\lambda_d$ of the matrix $D$ has only positive slopes. In this case, $E$ is pure if and only if $N_{\lambda_d}$ has exactly one edge. Then we have that the weight of $M$ equals the reciprocal of the slope of the edge.
\end{theoreme}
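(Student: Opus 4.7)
The plan is to analyze $M(E)$ through a Smith-normal-form decomposition of the matrix $D$ and to translate the resulting one-variable data into Newton-polygon language via the $1/t$-adic valuation on $\btt$.

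First, since $K$ is perfect, the twisted polynomial ring $K\{\tau\}$ is a two-sided Euclidean domain with respect to the $\tau$-degree, and this persists after extending coefficients to $\btt$. I would therefore put $D$ into Smith normal form over $\btt\{\tau\}$, producing invariant factors $\lambda_1 \mid \cdots \mid \lambda_d$. The Newton polygon of each $\lambda_i$, computed from the $1/t$-adic valuation of its coefficients, has its slopes appearing among the slopes of $N_{\lambda_d}$, since every $\lambda_i$ divides $\lambda_d$. Consequently, any condition on all invariant factors can be tested on $\lambda_d$ alone.

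For the abelian criterion, I would interpret this normal form as identifying $\btt \otimes_{K[t]} M(E)$ with a direct sum of cyclic $\btt\{\tau\}$-modules of the form $\btt\{\tau\}/\btt\{\tau\}\lambda_i$. The dimension of each summand as a $\btt$-vector space is finite exactly when the relation $\lambda_i(\tau) = 0$ allows one to re-express every high power of $\tau$ via a $K[t]$-combination of lower powers, and this recurrence terminates precisely when every slope of $\lambda_i$ is strictly positive (so that the leading $\tau$-coefficient dominates in the appropriate sense). Patching the summands and descending from $\btt$ to $K[t]$ by flatness yields the equivalence between the abelianness of $M(E)$ and positivity of all slopes of $N_{\lambda_d}$.

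Assuming $E$ abelian, I would prove the purity criterion by constructing a natural $\att$-lattice $\Lambda \subset \btt \otimes_{K[t]} M(E)$ from the Smith decomposition (one standard lattice per invariant factor piece) and analyzing the equation $t^u\Lambda = \tau^v\Lambda$. On each summand, balancing $1/t$-adic valuations forces the ratio $v/u$ to equal the reciprocal of each slope of $\lambda_i$; the equation can be satisfied uniformly across all summands only when every slope of every $\lambda_i$ coincides, i.e.\ when $N_{\lambda_d}$ has a single edge. The identification of $w(M) = d/\operatorname{rk}(M)$ with the reciprocal slope of that edge then follows from a dimension count comparing $d = \dim_K \operatorname{Lie}(E)$ and $\operatorname{rk}_{K[t]} M(E)$ via the weighted $\tau$-degrees in $\lambda_d$. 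The main obstacle will lie in the first step: ensuring that the Newton polygon of $\lambda_d$ is a well-defined invariant of $(E,\phi)$, independent of the coordinate system $\kappa$ and of the choice of Smith decomposition, which requires careful functoriality of the Smith normal form over the non-commutative ring $\btt\{\tau\}$ together with invariance of Newton slopes under the available conjugacies.
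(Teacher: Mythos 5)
First, a point of comparison: the paper does not actually prove this statement --- it is quoted from Maurischat (\cite[Theorems 6.6 and 7.2]{maurischat}) and used as a black box --- so any argument you supply is necessarily a different route. Judged on its own terms, your sketch has a genuine gap at its core: you complete in the wrong direction for the abelianness criterion. The invariant factors $\lambda_i$ in the statement are obtained by diagonalizing $tI_d-D$ over $K(\{\sigma\})[t]$, i.e.\ after passing to twisted Laurent series in $\sigma=\tau^{-1}$ while keeping $t$ polynomial, and the Newton polygon records the $\sigma$-adic valuations of the coefficients of the powers of $t$. You instead diagonalize over $\btt\{\tau\}$ and read off a polygon from the $1/t$-adic valuations of the $\tau$-coefficients. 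This cannot detect whether $M(E)$ is finitely generated over $K[t]$: the functor $\btt\otimes_{K[t]}(-)$ is flat but not faithfully flat, so ``descending by flatness'' is exactly the step that fails (finite generation is controlled at the finite places of $K[t]$, which are killed by tensoring with $\btt$). Your finiteness test is also internally inconsistent: once $\btt\otimes_{K[t]}M(E)$ is decomposed into cyclic modules $\btt\{\tau\}/\btt\{\tau\}\mu_i$, every nonzero summand has finite $\btt$-dimension $\deg_\tau\mu_i$ regardless of any slope condition, so the criterion ``finite dimension iff all slopes positive'' distinguishes nothing.

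The $1/t$-adic picture is the natural one for the purity half (Anderson's lattice definition lives there), and balancing valuations in $t^u\Lambda=\tau^v\Lambda$ summand by summand is the right heuristic. But the theorem asserts that purity is equivalent to a condition on the $\sigma$-adic Newton polygon of $\lambda_d$, so the real work is the comparison between the two completions --- showing that the slopes of the polygon of $tI_d-D$ over $K(\{\sigma\})[t]$ compute the elementary divisors of a $\att$-lattice under $\tau$. That dictionary (the isocrystal-style slope theory Maurischat sets up) is absent from your sketch; without it neither direction of ``pure iff one edge'', nor the identification of $w(M)=d/\operatorname{rk}(E)$ with the reciprocal slope, follows. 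You do correctly flag that invariant factors over a non-commutative PID are only unique up to similarity, so that well-definedness of $N_{\lambda_d}$ needs an argument; that is a real issue, but it is secondary to the two problems above.
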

Here we recall that the invariant factors of a matrix $D\in M_d(K\{\tau\})$ are obtained by diagonalizing the matrix $tI_d-D\in M_d(K(\{\sigma\})[t])$ by performing elementary operations on the rows and columns in $K(\{\sigma\})[t]$. Contrary to the commutative case the invariant factors are only unique up to similarity. 
\begin{ex}
In \cite{maurischat}, Maurischat defined the $t$-module given by the matrix
$$M:=\begin{pmatrix}
 \theta & 0 \\
 1& \theta \end{pmatrix}+\begin{pmatrix} 0 & 0 \\1 &0 \end{pmatrix}.\tau+\begin{pmatrix}
 1 & 0 \\
0 & 1 \end{pmatrix}.\tau^2+\begin{pmatrix}
0 & 1\\
 0 & 0 \end{pmatrix}.\tau^3\in M_2(K\{\tau\}).$$
By diagonalizing the matrix $tI_2-M$ we get the matrix
 $$\begin{pmatrix}1 &0 \\
 0 & \lambda_2\end{pmatrix}$$
 where
$$\lambda_2= \left(-\sigma^{-3}+(\theta+\theta^{q^2})\sigma^{-2}+\theta^{q^3+1}\right)-\left(2\sigma^{-2}+\theta^{q^{-3}}+\theta\right).t+t^2\in K(\{\sigma\}).$$
 If $\operatorname{char}(K)=2$ then we represent the Newton polygon of $\lambda_2$ in Figure 1. It has only one edge of slope $\dfrac{3}{2}$ hence by Theorem \ref{3} the $t$-module is abelian and pure of weight $\dfrac{2}{3}.$

\end{ex}

The authors of \cite{derivative} showed in the same paper the next result.
\begin{theoreme}\label{pspur} With the previous notation, an almost strictly pure $t$-module  is pure of weight $\frac{s}{r}$.
\end{theoreme}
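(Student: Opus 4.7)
The plan is to invoke Theorem~\ref{3} by translating the almost-strict-purity hypothesis into information about the last invariant factor $\lambda_d$ of $D$. Write $\mathbf{k} = (\kappa_1,\ldots,\kappa_d)^T$, so that $t\mathbf{k} = D\mathbf{k}$ and therefore
\[
t^s\mathbf{k} = D^s\mathbf{k} = \sum_{i=0}^r A_i\tau^i\mathbf{k}
\]
inside $M(E)$, with $A_r \in GL_d(K)$ by hypothesis.

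The first step is to check that $E$ is abelian. Invertibility of $A_r$ yields
\[
\tau^r \mathbf{k} = A_r^{-1}\Bigl(t^s\mathbf{k} - \sum_{i=0}^{r-1} A_i\tau^i\mathbf{k}\Bigr),
\]
and by induction every $\tau^n\kappa_j$ with $n\ge r$ lies in the $K[t]$-span of the $rd$ elements $\{\tau^i\kappa_j : 0\le i < r,\ 1\le j\le d\}$. Hence $M(E)$ is finitely generated over $K[t]$, so $E$ is abelian, and a rank count gives $\operatorname{rk}(E) = rd/s$, consistent with the expected weight $s/r = d/\operatorname{rk}(E)$.

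The second step is to read off the Newton polygon of $\lambda_d$. The invertibility of $A_r$ allows the Smith-form reduction of $tI_d - D$ over $K(\{\sigma\})[t]$ to be carried out without introducing extraneous $\sigma$-valuation, so $\lambda_d$ records the single relation $t^s = A_0 + A_1\tau + \cdots + A_r\tau^r$. Under $\tau = \sigma^{-1}$, the term $A_r\tau^r$ pins the $t$-constant coefficient of $\lambda_d$ to $\sigma$-valuation exactly $-r$, whereas the leading $t^s$-term sits at $(s,0)$. This produces a single edge from $(0,-r)$ to $(s,0)$ of slope $r/s$, and Theorem~\ref{3} then gives purity of weight $s/r$.

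The main obstacle lies in rigorously excluding intermediate corners of $N_{\lambda_d}$: one has to verify that no intermediate coefficient of $\lambda_d$ sinks strictly below the line from $(0,-r)$ to $(s,0)$, since such a corner would correspond to a nontrivial slope decomposition of $M(E)$ incompatible with the uniform invertibility of the top $\tau$-coefficient of $D^s$. A conceptually simpler alternative is to bypass invariant factors and construct a lattice directly: let $\Lambda \subset \btt\otimes_{K[t]}M(E)$ be the $\att$-module generated by $\{\tau^i\kappa_j : 0\le i< r,\ 1\le j\le d\}$ and verify $t^s\Lambda = \tau^r\Lambda$ by rearranging $D^s\mathbf{k} = t^s\mathbf{k}$, the key point being that $A_r^{-1}$ inverts the leading $\tau^r$-contribution in both directions.
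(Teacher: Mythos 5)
First, a point of comparison: the paper does not prove this theorem at all --- it is quoted from \cite{derivative} --- so there is no internal proof to measure your attempt against; I can only judge it on its merits. Your overall plan is the right one (deduce abelianness from the invertibility of $A_r$, then get purity either from Theorem~\ref{3} or from an explicit lattice), and the abelianness step is correct as written. But both of your routes to purity stop exactly at the crux. In the Newton-polygon route you yourself name the missing step --- ruling out corners of $N_{\lambda_d}$ strictly below the segment --- and you do not supply it. Moreover, your description of the polygon as running from $(0,-r)$ to $(s,0)$ presupposes $\deg_t \lambda_d = s$, which is false in general: for $D=\theta I_2+\left(\begin{smallmatrix}0&1\\1&0\end{smallmatrix}\right)\tau$ one has $s=r=1$ but $\lambda_2=-\tau+\sigma(t-\theta^{q})(t-\theta)$ up to units, of $t$-degree $2$. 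Only the slope $r/s$ is the invariant statement, and establishing it is precisely the content you have deferred.

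The lattice route is the standard and workable one, but ``verify $t^s\Lambda=\tau^r\Lambda$ by rearranging'' conceals a genuine difficulty, and it is asymmetric between the two inclusions. The inclusion $\tau^r\Lambda\subseteq t^s\Lambda$ really is a rearrangement: from $t^s\tau^l\mathbf{k}=\sum_{i=0}^{r}A_i^{(l)}\tau^{i+l}\mathbf{k}$ one solves for $\tau^{r+l}\mathbf{k}$ using $(A_r^{(l)})^{-1}$, and the leftover low-order terms lie in $\Lambda\subseteq t^s\Lambda$ because $t^{-s}\in\att$. The reverse inclusion is not: in $t^s\tau^l\mathbf{k}=\sum_{i=0}^{r}A_i^{(l)}\tau^{i+l}\mathbf{k}$ the terms with $i+l<r$ lie in $\Lambda$ but not visibly in $\tau^r\Lambda$, and since $A_0$ need not be invertible you cannot solve downwards. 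One must iterate, using that $t^{-s}$ lies in the maximal ideal of $\att$ so that $I-t^{-s}A_0^{(j)}$ is a unit of $M_d\!\left(\att\right)$, to push each $\tau^j\mathbf{k}$ with $j<r$ into the $\att$-span of strictly higher $\tau$-powers; alternatively, prove the one inclusion and conclude equality by comparing indices (determinants) of the two lattices. Until one of these is carried out, the proof is incomplete at its decisive step.
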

We now turn our interest to the reciprocal of the above result, and we answer negatively.

\begin{theoreme}\label{4} For any integer $d\geq 2$ there exists a pure but not almost strictly pure $t$-module of dimension $d$.
\end{theoreme}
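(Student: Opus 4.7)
The strategy is to exhibit an explicit pure but not almost strictly pure $t$-module of dimension~$2$, then extend to arbitrary $d \geq 2$ by direct sum with copies of the Carlitz module. For $d = 2$, choose $c \in K$ transcendental over $\F_q$ (for instance $c = \theta$) and set
\[
D = \begin{pmatrix} \theta + \tau & c\tau^2 \\ 0 & \theta + \tau \end{pmatrix} \in M_2(K\{\tau\}).
\]
The degree-$0$ part of $D$ is $\theta I_2$, so $d\varphi_t - \ell(t)I_2 = 0$ is trivially nilpotent and $D$ defines a valid $t$-module of dimension~$2$.

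To prove purity, I would apply Theorem~\ref{3}. The matrix $tI_2 - D$ is upper triangular in $M_2(K(\{\sigma\})[t])$ with diagonal $p(t) := t - \theta - \sigma^{-1}$ and $(1,2)$-entry $-c\sigma^{-2}$, which is a unit in $K(\{\sigma\})$. Using this unit as a pivot, one reduces $tI_2 - D$ by elementary row and column operations to $\operatorname{diag}(1, \lambda_2)$ with $\lambda_2$ similar to $p(t)^2$. Expanding via $\sigma^{-1}\theta = \theta^q \sigma^{-1}$ gives
\[
\lambda_2 = t^2 - 2(\theta + \sigma^{-1})t + \theta^2 + (\theta + \theta^q)\sigma^{-1} + \sigma^{-2};
\]
its coefficients at $t^2, t^1, t^0$ have $\sigma$-valuations $0, -1, -2$ (with the middle replaced by $+\infty$ in characteristic~$2$), all lying on the line of slope $1$ through $(0, -2)$ and $(2, 0)$. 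The Newton polygon thus has a single edge of slope~$1$, so by Theorem~\ref{3} the $t$-module is pure of weight $1$.

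To prove it is not almost strictly pure, I would exploit the upper-triangular form:
\[
D^s = \begin{pmatrix} (\theta + \tau)^s & B_s \\ 0 & (\theta + \tau)^s \end{pmatrix},\quad B_s = (\theta + \tau)B_{s-1} + c\tau^2(\theta + \tau)^{s-1},\quad B_1 = c\tau^2.
\]
The diagonal entries have $\tau$-degree $s$ with leading coefficient $1$. By induction on $s$ the $(1,2)$-entry $B_s$ has $\tau$-degree $s + 1$ with leading coefficient $c + c^q + \dots + c^{q^{s-1}}$, which is nonzero since $c$ is transcendental over $\F_q$. Hence the top $\tau$-coefficient of $D^s$ is the nilpotent matrix $\left(\begin{smallmatrix} 0 & c + \dots + c^{q^{s-1}} \\ 0 & 0 \end{smallmatrix}\right)$, not in $GL_2(K)$ for any $s \geq 1$.

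For $d \geq 3$, form the direct sum $(E_d, \varphi_d) = (E, \varphi) \oplus C^{\oplus (d-2)}$, where $C$ is the Carlitz module ($\varphi_t^C = \theta + \tau$), itself pure of weight~$1$. Purity at weight $1$ is preserved by direct sum, by taking the direct sum of $\att$-lattices after rescaling exponents to match. The matrix $\varphi_{d,t}^s$ is block diagonal, and since each Carlitz block $(\theta + \tau)^s$ has $\tau$-degree $s < s+1$, the top $\tau$-coefficient of $\varphi_{d,t}^s$ consists of the nilpotent $2 \times 2$ block from $D^s$ together with zero blocks elsewhere, hence is singular. The main obstacle is the non-commutative Smith-form computation: one must justify in $M_2(K(\{\sigma\})[t])$ that $\lambda_2$ really is similar to $p(t)^2$ and not another element with a different Newton polygon.
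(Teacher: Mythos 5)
Your proposal is correct and follows essentially the same route as the paper: an explicit two-dimensional module all of whose powers have nilpotent leading $\tau$-coefficient (the paper uses the lower-triangular mirror of your matrix, with the $\tau^2$ term below the diagonal), purity via the Newton polygon of the last invariant factor and Theorem \ref{3}, and passage to dimension $d>2$ by adjoining $d-2$ Carlitz blocks (the paper notes the direct-sum argument you invoke and also runs the diagonalization explicitly). The one point you flag as an obstacle dissolves under direct computation: pivoting on the unit $-c\sigma^{-2}$ yields the invariant factor explicitly as $\lambda_2=\pm\, p(t)\,\sigma^{2}c^{-1}\,p(t)$ rather than $p(t)^2$; the interior unit only translates the Newton polygon vertically by $2$, so it still consists of a single edge of slope $1$, and no appeal to similarity-invariance of Newton polygons is needed --- this is exactly the computation the paper performs. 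Your recursion $B_s=(\theta+\tau)B_{s-1}+c\tau^{2}(\theta+\tau)^{s-1}$, giving leading coefficient $c+c^{q}+\dots+c^{q^{s-1}}$, is a cleaner and fully justified version of the paper's stated-without-proof induction formula for $D_2^{n}$, and it correctly isolates the hypothesis (implicit in the paper as well) that $c$ be transcendental over $\F_q$ so that this leading coefficient never vanishes.
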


\begin{proof}
We first consider the case $d=2$. Let us note $\theta=\ell(t)$. Consider the $t$-module given by the matrix 
 $$D_2=\begin{pmatrix} 1 & 0 \\ 0&1\end{pmatrix}+\begin{pmatrix}1 & 0 \\ 0& 1\end{pmatrix}.\tau+\begin{pmatrix} 0 & 0 \\ \theta & 0 \end{pmatrix}.\tau^2\in M_d(K\{\tau\}).$$
Let us diagonalize the matrix  $tI_2-D_2$:

$$
\begin{pmatrix} 
t-\tau -1& 0 \\
-\theta\tau^2 & t-\tau-1
\end{pmatrix}
\xrightarrow{L_1\leftrightarrow L_2} \begin{pmatrix} -\theta\tau^2 & t-\tau-1 \\ t-\tau-1 & 0\end{pmatrix}
\xrightarrow{C_1\leftrightarrow C_1(-\sigma^2\theta^{-1})}
\begin{pmatrix} 
 1&t-\tau-1 \\
 (t-\tau-1)(-\sigma^2\theta^{-1})& 0 
\end{pmatrix}$$
$$
\begin{pmatrix} 
 1&t-\tau-1 \\
 (t-\tau-1)(-\sigma^2\theta^{-1})& 0 
\end{pmatrix}\xrightarrow{\substack{C_2\rightarrow C_2-C_1(t-\tau-1)\\ L_2\rightarrow L_2-(t-\tau-1)(-\sigma^2\theta^{-1})L_1}}
\begin{pmatrix}
1 & 0 \\
0 &\lambda_2
\end{pmatrix}$$

 where $$\begin{aligned}\lambda_2&=(t-\tau-1)(-\sigma^2\theta^{-1}(t-\tau-1) \\  &=-\sigma^{2}\theta^-1 t^2+t.(\sigma^2\theta^{-1}(\tau+1)+(\tau+1)\sigma^2\theta^{-1})+-(\tau+1)\sigma^2\theta^{-1}(\tau+1).\end{aligned}$$ 
We represent the Newton polygon of $\lambda_2$ in Figure 2.
 It has only one edge of positive slope equal to $1$, hence according to Theorem \ref{3} this $t$-module is pure of weight 1. \\
 One can prove by induction that for all $n\geq 2$, $D_2^n$ has the  form:
 $D_{2}^n=\theta^n I_2+ \sum\limits_{k=1}^{n+1} A_{k,n}\tau^k$ with 
 $$A_{1,n}=\begin{pmatrix} a_{1,n} & 0\\ 0 & a_{1,n}\end{pmatrix},\  A_{k,n}=\begin{pmatrix} a_{k,n} & 0 \\ b_{k,n} &a_{k,n}\end{pmatrix}, 2\leq k\leq n-1, \ A_{n,n}=\begin{pmatrix} 1 &0\\ b_{n,n} &1\end{pmatrix}, \ A_{n+1,n}=\begin{pmatrix} 0 & 0\\ x_{n+1,n} & 0\end{pmatrix}$$ with:
 $$\left\{\begin{aligned} &a_{k_n}\in \F_q[\theta], \text{ monic of degree } q^k(n-k), \ 1\leq k \leq n-1,\\  &b_{k,n}\in \F_q[\theta], \text{ monic of degree }q^k(n-k+1)+q^{k-2}, \  2\leq k\leq n,\\ &x_{n+1,n}\in \F_q[\theta], \text{ monic of degree } q^n.\end{aligned}\right.$$ In particular, this $t$-module is not almost strictly pure.
   \end{proof}
Now we consider the general case $d>2$. We put $m:=d-2>0$ and consider the $t$-module given by the matrix
  $$D_{2+m}=\begin{pmatrix} 
    D_2 & \empty &\empty &\empty\\
    \empty &\theta+\tau &\empty &\empty \\ 
    \empty& \empty  & \ddots &\empty \\
    \empty & \empty & \empty & \theta+\tau
    \end{pmatrix}\in M_{2+m}(K\{\tau\}).$$
    
    This $t$-module is the direct sum of pure $t$-modules of weight $1$ (the $t$-module associated to $D_2$ and $d-2$ copies of the Carlitz module), so we can prove it is a pure $t$-module of weight $1$, but here we give a proof using Maurischat's algorithm. \\
    For $n\geq 1$, the leading coefficient of the matrix $D_{2+m}^n$ is the matrix
    $$\begin{pmatrix} A_{n+1,n} &\empty \\ \empty & 0_m\end{pmatrix}$$ where $0_m\in M_m(K)$ is the zero-matrix. Thus, this $t$-module is  not almost strictly pure. \\ 
    Consider $(J_0)$ the algorithm that diagonalize as previously the matrix $tI_2-D_2$. Applying $(J_0)$ and exchanging row and columns, we get the matrix:
    
 $$tI_{2+m}-D_{2+m}\longrightarrow S=\begin{pmatrix} 
    1 & \empty &\empty &\empty &\empty  \\
    \empty &t-\theta-\tau &\empty &\empty &\empty \\ 
    \empty& \empty  & \ddots &\empty &\empty \\
    \empty & \empty & \empty & t-\theta-\tau & \empty  \\
    \empty & \empty & \empty & \empty & \lambda_2  \\
    \end{pmatrix}\in M_{2+m}(K\{\tau\}[t]).$$
     
   Consider the euclidean division of $\lambda_2$ by $t-\theta-\tau$:
   $$\lambda_2=q(t-\theta-\tau)+r, \ r\neq0 \text{ and } \operatorname{deg}_t(r)=0.$$
   
   Let $$S'=\begin{pmatrix} t-\theta-\tau & \empty \\ \empty & \lambda_2\end{pmatrix}\in M_2(K\{\tau\}[t]).$$
   We apply the following operations to the matrix $S'$ (and denote by $(J_1)$ this algorithm):
   $$\begin{aligned}L_{2}&\rightarrow L_{2}-qL_{1} \\ 
   C_{2}&\rightarrow C_{2}+C_{1} \\
   L_{2}&\rightarrow r^{-1}L_{2}\\
   L_{1}&\rightarrow L_{1}-(t-\theta-\tau)L_{2}\\
   C_{2}&\rightarrow C_2 -C_{1}r^{-1}\lambda_2 \\
   L_1&\leftrightarrow L_2 \\ 
   L_2&\rightarrow -L_2.
   \end{aligned}$$
   We get the matrix:
   
   $$\begin{pmatrix} \lambda_2 & \empty \\ \empty & (t-\theta-\tau)r^{-1}\lambda_2
   \end{pmatrix}.$$
By successively applying the algorithm $(J_1)$ to the matrices $S'$ which appear from the matrix $S$, we obtain the matrix
  $$S\longrightarrow \begin{pmatrix} 
    1 & \empty &\empty &\empty &\empty  \\
    \empty &\lambda_2 &\empty &\empty &\empty \\ 
    \empty& \empty  &  (t-\theta-\tau)r^{-1}\lambda_2&\empty &\empty  \\
    \empty & \empty & \empty & \ddots& \empty  \\
    \empty & \empty & \empty & \empty & (t-\theta-\tau)r^{-1}\lambda_2  
    \end{pmatrix}.$$
    
    As $\lambda_2$ and $t-\theta-\tau$ have Newton polygons consisting of only one edge of slope 1, the Newton polygon of the last coefficient of the last matrix has only one edge of slope $1$. It follows that the Newton polygon of the last invariant factor of $D_{m+2}$ has only one edge of slope 1. Hence $D_{m+2}$ is also a $t$-module which is pure of weight 1 but not almost strictly pure for all $m\geq 0$.

    \noindent\begin{minipage}[t]{0.45\linewidth}
    \begin{figure}[H]
    \centering
     \begin{tikzpicture}
        \begin{axis}[width=0.5\linewidth]
          \addplot[black,mark=*] coordinates {(0,-3)  (2,0) };
        \end{axis}
    \end{tikzpicture}
    \caption{\centering Newton polygon of the Anderson module $M$ constructed by Maurischat when $\operatorname{char} (K)=2$ .}
    \end{figure}
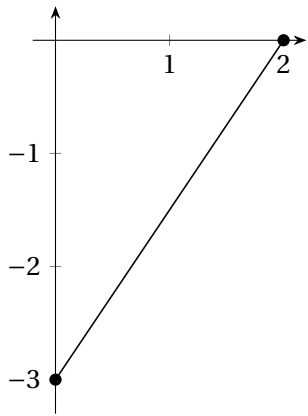
    \end{minipage}
    \hfill
    \begin{minipage}[t]{0.45\linewidth}
    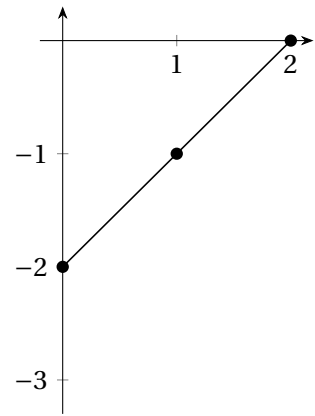
\begin{figure}[H]
    \centering
     \begin{tikzpicture}
        \begin{axis}[width=0.5\linewidth,ymin=-1]
          \addplot[black,mark=*] coordinates {(0,0) (1,1) (2,2) };
        \end{axis}
    \end{tikzpicture}
    \caption{Newton polygon of the Anderson module $D_2$ in Theorem \ref{4}.}
    \end{figure}
    \end{minipage}

\textit{\large Aknowledgements}

This result is part of my master-Thesis at University of Caen under the supervision of Floric Tavares Ribeiro and Tuan Ngo Dac that I would like to thank.
I would also like to thank Andreas Maurischat for pointing out an error in the previous version, which led to this new version.


\begin{thebibliography}{9}
	    \bibitem{Anderson}
    Greg W. Anderson. t-motives. Duke Maths. J., 53(2):457-502, 1986.
    
    \bibitem{HJ20}
Urs Hartl and  Ann-Kristin Juschka. Pink's theory of Hodge structures and the Hodge conjecture over function fields. t-motives: Hodge structures, transcendence and other motivic aspects, 31–182, EMS Ser. Congr. Rep., EMS Publ. House, Berlin, [2020], ©2020.
	  \bibitem{maurischat}
    Andreas Maurischat. Abelian equals A-finite for Anderson $A$-modules. Preprint available from ArXiv at https://arxiv.org/abs/2110.11114.
	\bibitem{derivative}
Changningphaabi Namoijam and Matthew A. Papanikolas. Hyperderivatives of periods and quasi-periods for Anderston t-modules. Preprint available from ArXiv at http://arxiv.org/abs/2103.05836,2021.



\end{thebibliography}
\end{document}